\newtheorem{theorem}{Theorem}
\newtheorem{lemma}[theorem]{Lemma}
\newtheorem{corollary}[theorem]{Corollary}
\DeclarePairedDelimiter{\ceil}{\lceil}{\rceil}
\DeclarePairedDelimiter{\floor}{\lfloor}{\rfloor}
\DeclareMathOperator{\sds}{sds}
\DeclareMathOperator{\maxsds}{maxsds}
\renewcommand{\geq}{\geqslant}
\renewcommand{\leq}{\leqslant}
\renewcommand{\ge}{\geqslant}
\renewcommand{\le}{\leqslant}
\def\eps{\varepsilon}
\begin{document}

\author{Carly Bodkin \thanks{\texttt{carly.bodkin@monash.edu}}
\thanks{Supported by an Australian Government Research Training Program (RTP) Scholarship.}
}
\affil{\small School of Mathematics\\Monash University\\Clayton Vic 3800 Australia}
\author{Anita Liebenau \thanks{\texttt{a.liebenau@unsw.edu.au}}
\thanks{Supported by the Australian Research Council grant 
DE170100789 and DP180103684.}
}
\affil{\small School of Mathematics and Statistics, UNSW Sydney NSW 2052 Australia} 
\author{Ian M. Wanless \thanks{\texttt{ian.wanless@monash.edu}}
\thanks{Supported by the Australian Research Council grant DP150100506.}
}
\affil{\small School of Mathematics\\Monash University\\Clayton Vic 3800 Australia}

\date{}
\title{Most binary matrices have no small defining set}
\maketitle

\begin{abstract}
Consider a matrix $M$ chosen uniformly at random from a class of
$m\times n$ matrices of zeros and ones with prescribed row and column
sums.  A partially filled matrix $D$ is a \emph{defining set} for $M$
if $M$ is the unique member of its class that contains the entries in
$D$.  The {\em size} of a defining set is the number of filled
entries. A {\em critical set} is a defining set for which the removal of
any entry stops it being a defining set.

For some small fixed $\eps>0$, we assume that $n\le m=o(n^{1+\eps})$,
and that $\lambda\le1/2$, where $\lambda$ is the proportion of entries
of $M$ that equal $1$. We also assume that the row sums of $M$ do not
vary by more than $\mathcal{O}(n^{1/2+\eps})$, and that the column
sums do not vary by more than $\mathcal{O}(m^{1/2+\eps})$.  Under
these assumptions we show that $M$ almost surely has no defining set
of size less than $\lambda mn-\mathcal{O}(m^{7/4+\eps})$. It follows
that $M$ almost surely has no critical set of size more than
$(1-\lambda)mn+\mathcal{O}(m^{7/4+\eps})$. Our results generalise
a theorem of Cavenagh and Ramadurai, who examined the case when
$\lambda=1/2$ and $n=m=2^k$ for an integer $k$.
\end{abstract}

\section{Introduction}

Let $m$ and $n$ be integers, and let ${\bf s}=(s_1,s_2,\dots,s_{m})$ and ${\bf t}=(t_1,t_2,\dots,t_{n})$ be vectors of non-negative integers. Then $\mathcal{A}({\bf s},{\bf t})$ is defined to be the set of all $m \times n$ binary matrices with $s_i$ ones in row $i$ and $t_j$ ones  in column $j$, where $1 \leq i \leq m$ and $1 \leq j \leq n$.
We say \emph{almost all} matrices in
$\mathcal{A}({\bf s},{\bf t})$ have a property if the probability that
a matrix chosen uniformly at random from $\mathcal{A}({\bf s},{\bf t})$
has the property tends to $1$ as $m,n \rightarrow \infty$.

A \emph{partial} binary matrix is a matrix $M$ with entries $0$, $1$ or $\star$, where we call a cell empty if its
entry is $\star$. Let $\mathcal{A}'({\bf s},{\bf t})$ denote the set of all $m \times n$ partial binary matrices with at most $s_i$ ones and $n-s_i$ zeros in row $i$, and at most $t_j$ ones and $m-t_j$ zeros in column $j$. Given $M \in \mathcal{A}({\bf s},{\bf t})$ and $D=[D_{ij}] \in \mathcal{A}'({\bf s},{\bf t})$ we write $D \subseteq M$ if $D_{ij} \in \{M_{ij}, \star\}$, for all $1 \leq i \leq m$ and $1 \leq j \leq n$.

Suppose $D \in \mathcal{A}'({\bf s},{\bf t})$ and $M \in \mathcal{A}({\bf s},{\bf t})$. Then we say $D$ is a \emph{defining set} for $M$ if $M$ is the unique member of $\mathcal{A}({\bf s},{\bf t})$ such that $D \subseteq M$. Furthermore, for $D \subseteq M$ we define the partial matrix $M \setminus D \in \mathcal{A}'({\bf s},{\bf t})$ by
$$
(M \setminus D)_{ij} =
\begin{cases}
M_{ij} & \text{if  } D_{ij} = \star \\
\star & \text{otherwise}.
\end{cases}$$
The \emph{size} of a partial binary matrix $D$, denoted $|D|$, is the number of nonempty cells. We define 
\begin{align*}
\sds(M) &= \min\{|D| : D\text{ is a defining set for }M\}, \\
\maxsds({\bf s},{\bf t}) &= \max \{\sds(M) : M \in \mathcal{A}({\bf s},{\bf t})\}.
\end{align*}
Also, define $\maxsds(m,n)$ to be the maximum of $\sds$ amongst all $m\times n$ binary matrices.

For integers $k$ and $n$, let $\Lambda^{k}_{n}$ be the set of all $n
\times n$ binary matrices with constant row and column sum $k$. In
\cite{CavRam18}, Cavenagh and Ramadurai construct a matrix in
$\Lambda^{k}_{2k}$ with no defining set of size less than
$2k^2-\mathcal{O}(k^{7/4})$ whenever $k$ is a power of $2$.  In
\S\ref{s:prfmain}, we prove a similar result for almost all
matrices $M\in\mathcal{A}({\bf s},{\bf t})$, for every pair of
integers $m$ and $n$, provided $M$ is not too far from square, and the
number of ones in each row and column does not stray too far from the
average values $s$ and $t$. Our result is this:

\begin{theorem}\label{main-result}
  Fix a sufficiently small $\eps >0$. For integers
  $m,n \rightarrow \infty$ with $n\le m=o(n^{1+\eps})$,
  let ${\bf s}=(s_1,s_2,\dots,s_{m})$ and ${\bf  t}=(t_1,t_2,\dots,t_{n})$
  be vectors of positive integers such that
  $\sum^m_{i=1}s_i = \sum^{n}_{j=1}t_j$. Define
  $s=m^{-1}\sum^{m}_{i=1}s_i$ and $t=n^{-1}\sum^{n}_{j=1}t_j$ and suppose that $|s_i-s| =\mathcal{O}(n^{1/2+\eps})$
  uniformly for $1 \leq i \leq m$, and
  $|t_j-t| = \mathcal{O}(m^{1/2+\eps})$ uniformly for $1 \leq j \leq n$. Suppose
$\lambda = s/n=t/m\leq 1/2$ and that $\lambda$ is bounded away from zero. 
  Also suppose that 
$$\dfrac{(1-2\lambda)^2}{4\lambda(1-\lambda)}\left( 1+ \dfrac{5m}{6n} + \dfrac{5n}{6m}\right) \leq  \dfrac{\log m}{3}.$$
Then almost all matrices in $\mathcal{A}({\bf s},{\bf t})$ have no defining set of size less than $\lambda mn-\mathcal{O}(m^{7/4+\eps})$.
\end{theorem}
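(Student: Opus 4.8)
The plan is to work with \emph{intercalates} of $M$: a $2\times 2$ submatrix on rows $\{i,i'\}$ and columns $\{j,j'\}$ on which $M$ has the pattern $\bigl(\begin{smallmatrix}1&0\\0&1\end{smallmatrix}\bigr)$ or $\bigl(\begin{smallmatrix}0&1\\1&0\end{smallmatrix}\bigr)$. Interchanging the four entries of such a submatrix produces a second matrix of $\mathcal{A}(\mathbf{s},\mathbf{t})$ that agrees with $M$ off those four cells, so any defining set must contain at least one of the four cells. Hence, if $E$ is the set of empty cells of a defining set, then $E$ contains no intercalate, and therefore $\sds(M)\ge mn-\alpha(M)$, where $\alpha(M)$ is the largest size of a set of cells of $M$ containing no intercalate; it thus suffices to prove that almost all $M$ satisfy $\alpha(M)\le (1-\lambda)mn+\mathcal{O}(m^{7/4+\eps})$. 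Because $\mathbf{t}$ is near-uniform we have $\sum_j(m-t_j)=mn-nt=(1-\lambda)mn$ exactly, so writing $E=E_1\cup E_0$ for the split of an intercalate-free $E$ into its one-cells and zero-cells, and $Z$ for the zero-cells of $M$ not in $E$, the goal becomes the inequality $|E_1|\le |Z|+\mathcal{O}(m^{7/4+\eps})$: every empty one-cell must, up to a small surplus, be paid for by a filled zero-cell. It is worth noting that the naive lower bound $\sds(M)\ge(\text{maximum number of pairwise cell-disjoint intercalates})$, and even its fractional relaxation, only yields $\tfrac12\lambda mn+o(mn)$, since each intercalate uses two one-cells and $\lambda\le\tfrac12$; recovering the missing factor of two is the whole point.

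The extra factor comes from exploiting intercalate-freeness pairwise. Fix two rows $i\ne i'$, and among the roughly $2\lambda(1-\lambda)n$ columns in which $M$ differs on these rows let $a_{ii'}$ be the number of columns of type $(1,0)$ whose two cells both lie in $E$, and $b_{ii'}$ the number of columns of type $(0,1)$ whose two cells both lie in $E$; then intercalate-freeness forces $\min(a_{ii'},b_{ii'})=0$. Record an orientation $i\Rightarrow i'$ when $b_{ii'}=0<a_{ii'}$, and leave $\{i,i'\}$ unoriented if $a_{ii'}=b_{ii'}=0$. The relation $i\Rightarrow i'$ says exactly that in every column $j$ with $M_{ij}=0$ and $M_{i'j}=1$, either the zero-cell $(i,j)$ lies in $Z$ or the one-cell $(i',j)$ is absent from $E$; an unoriented pair imposes this for all $\approx 2\lambda(1-\lambda)n$ differing columns at once, which is expensive in $Z$. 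The plan is then: first, dispose of the case in which many row-pairs (or, symmetrically, column-pairs) are unoriented, since these directly force $|Z|$, hence $mn-|E|$, to be large; second, in the remaining near-tournament, use the elementary bound that a tournament has at most $2d+1$ vertices of in-degree at most $d$ to conclude that all but $\mathcal{O}(\log m)$ rows have in-degree at least any prescribed constant multiple of $\log m$; and third, show that for a row $i'$ of large in-degree, every one-cell of row $i'$ kept in $E$ forces, through the many dominators of $i'$ that carry a $0$ in that column, a comparable number of zero-cells of that column into $Z$. Carrying this out column by column, together with the dual statement for columns, should give $|E_1|\le|Z|+\mathcal{O}(m^{7/4+\eps})$.

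Each of these steps requires $M$ to be pseudorandom in appropriate local senses: the number of differing columns for a typical pair of rows (and of differing rows for a typical pair of columns) should be close to its binomial mean; a set of more than a constant times $\log m$ rows should meet a typical column in close to the expected number of zeros among those rows; and no large all-ones submatrix should exist, which is what would let an adversary evade the third step. These are standard consequences of the near-uniformity of $\mathbf{s}$ and $\mathbf{t}$ for a uniformly random member of $\mathcal{A}(\mathbf{s},\mathbf{t})$, and one would obtain them from the switching method together with known asymptotic enumeration estimates for $\mathcal{A}(\mathbf{s},\mathbf{t})$; it is precisely here that the hypotheses $|s_i-s|=\mathcal{O}(n^{1/2+\eps})$, $|t_j-t|=\mathcal{O}(m^{1/2+\eps})$, $n\le m=o(n^{1+\eps})$, and the displayed inequality bounding $\tfrac{(1-2\lambda)^2}{4\lambda(1-\lambda)}\bigl(1+\tfrac{5m}{6n}+\tfrac{5n}{6m}\bigr)$ get consumed, both to make the enumeration estimates valid and to keep the accumulated errors within $\mathcal{O}(m^{7/4+\eps})$. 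The genuine obstacle, I expect, is the combinatorial core of the second paragraph: turning the purely local constraints $\min(a_{ii'},b_{ii'})=0$ into the global, tight inequality $|E_1|\le|Z|+\mathcal{O}(m^{7/4+\eps})$ without conceding the factor of two, and in particular handling the regime in which $E_0$ falls well short of all zero-cells, so that a single filled zero-cell may license many kept one-cells at once and the bookkeeping becomes delicate. Once $\alpha(M)$ is controlled in this way, the stated lower bound on $\sds(M)$, and hence the claim that almost all $M\in\mathcal{A}(\mathbf{s},\mathbf{t})$ have no defining set of size below $\lambda mn-\mathcal{O}(m^{7/4+\eps})$, follows immediately.
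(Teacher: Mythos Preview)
Your approach is genuinely different from the paper's and, as you yourself concede, incomplete at the decisive step.

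The paper does not argue via intercalates at all. It uses the exact characterisation of defining sets due to Cavenagh and Ramadurai (Theorem~\ref{goodform}): $D$ is a defining set for $M$ if and only if the rows and columns of $M\setminus D$ can be permuted into \emph{good form}, meaning there is a South--East lattice walk with every one below it and every zero above it. That staircase structure is the whole engine. The region below the walk is chopped into $\lceil m^{1/4}\rceil$ rectangular blocks (with $O(m^{3/4})$ rows of slack per column), and on each block one invokes a uniform discrepancy bound $|\delta(M[R,C])|\le\Delta(m,n)$; this already yields $|D|\ge\lambda mn-O(m^{7/4}+m^{1/4}\Delta)$ deterministically (Lemma~\ref{NickCLemma}). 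The probabilistic input is then just that almost all $M$ satisfy $\Delta=O(mn^{1/2+\eps}+nm^{1/2+\eps})$ simultaneously for every subarray, obtained by comparing $G({\bf s},{\bf t})$ to the binomial bipartite graph $G(m,n,\lambda)$ via the Canfield--Greenhill--McKay enumeration and Chernoff bounds (Theorem~\ref{main}). No charging, no tournaments, no row-pair orientations.

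Your route through $\alpha(M)$ is strictly more ambitious, and that is where the gap lies. ``Can be put in good form'' genuinely implies ``intercalate-free'' but not conversely for partial matrices: the six cells $(1,1){=}0$, $(1,2){=}1$, $(2,2){=}0$, $(2,3){=}1$, $(3,1){=}1$, $(3,3){=}0$ contain no $2\times2$ grid at all, hence no intercalate, yet the induced column-ordering constraints form a $3$-cycle and are inconsistent with good form. So $\alpha(M)\ge mn-\sds(M)$, and you are attempting to bound a quantity that is in principle larger than what the theorem requires. The tournament-and-charging scheme you sketch does not close this: the double-counting you flag, in which one cell of $Z$ may be charged by many cells of $E_1$ across different dominating rows, is exactly the place where the factor of two is at stake, and nothing in the outline controls it. The paper bypasses the issue entirely because the South--East walk linearly orders the cells and reduces the accounting to a one-line discrepancy estimate on rectangles rather than a pairwise combinatorial argument. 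If you want to salvage your plan, the cleanest fix is to replace intercalate-freeness with the full good-form characterisation from the outset; the orientation idea then becomes unnecessary.
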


This result significantly generalises the theorem of Cavenagh and Ramadurai mentioned above, albeit with a slightly worse error term. Taking $m=n=2k$ and $s_i=t_j=k$ for all $i,j$, Theorem \ref{main-result} implies the following corollary.

\begin{corollary}\label{Cor}
For any integer $k$,  almost all matrices $M \in \Lambda^{k}_{2k}$ have no defining set of size smaller than $2k^2 - \mathcal{O}(k^{7/4+\eps})$.
\end{corollary}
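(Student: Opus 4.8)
The plan is to reformulate defining sets in terms of trades and then to reduce the whole statement to a single extremal/probabilistic estimate about a random matrix in $\mathcal A(\mathbf s,\mathbf t)$. Recall that $D$ is a defining set for $M$ exactly when the partial matrix $M\setminus D$ contains no nonempty \emph{trade}, i.e. no set of cells on which flipping the entries of $M$ preserves all row and column sums. Every nonempty trade contains an alternating cycle (a cycle alternating between $1$-cells and $0$-cells), and conversely; orienting each cell of $M$ as an arc $r\to c$ when $M_{rc}=1$ and $c\to r$ when $M_{rc}=0$ turns alternating cycles into directed cycles. So a set $E$ of cells is trade-free precisely when the sub-digraph it induces is acyclic, and
$$\sds(M)=mn-\bigl(\text{largest trade-free set of cells of }M\bigr).$$
Taking $E$ to be all $0$-cells shows $\sds(M)\le\lambda mn$, so the remaining task is the matching lower bound: for almost all $M$, every trade-free set of cells has size at most $(1-\lambda)mn+\mathcal O(m^{7/4+\eps})$.

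Next I would parametrise trade-free sets. A set $E$ is trade-free iff there is a linear order $\pi$ of the $m+n$ rows and columns with $r\prec_\pi c$ for every $1$-cell $(r,c)\in E$ and $c\prec_\pi r$ for every $0$-cell $(r,c)\in E$; thus $E$ lies inside the set $E_\pi$ of all such cells. Fixing the order $\sigma$ that $\pi$ induces on the columns and then placing each row optimally relative to $\sigma$ (a choice that decouples across rows), a direct count gives $|E_\pi|\le(1-\lambda)mn+\sum_{r}g_r(\sigma)$, where $g_r(\sigma)$ is the maximum over prefixes of the $\pm1$ walk obtained by reading row $r$ of $M$ in the reverse $\sigma$-order of columns, scoring $+1$ at a $1$-entry and $-1$ at a $0$-entry. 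Since $g_r(\sigma)\ge0$ the all-zeros set is recovered, and the theorem reduces to showing that for almost all $M$,
$$\max_{\sigma}\ \sum_{r=1}^{m}g_r(\sigma)=\mathcal O\bigl(m^{7/4+\eps}\bigr),$$
the maximum being over the $n!$ orderings $\sigma$ of the columns. For a \emph{fixed} $\sigma$, $g_r(\sigma)$ is the largest height of a walk with $s_r\le n/2$ up-steps among $n$ steps, so $\Pr[g_r(\sigma)\ge t]$ decays geometrically with ratio essentially $\lambda/(1-\lambda)$ (and only at the Brownian scale in the boundary case $\lambda=1/2$); this is where the hypothesis relating $(1-2\lambda)^2/\bigl(4\lambda(1-\lambda)\bigr)$ to $\log m$ enters, controlling the conditional law of a single row of $M\in\mathcal A(\mathbf s,\mathbf t)$. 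Summing over rows, $\sum_r g_r(\sigma)$ has mean of order $m$ (order $m\sqrt n$ if $\lambda=1/2$) and concentrates — a single switching changes it by $\mathcal O(1)$, so a bounded-difference inequality for the uniform measure on $\mathcal A(\mathbf s,\mathbf t)$ gives a sub-Gaussian-type tail — and a union bound over the $n!$ column orders with threshold $Cm^{7/4+\eps}$ then fails with probability $o(1)$.

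The main obstacle is precisely this probabilistic input. All the tail and concentration estimates must be established for the uniform distribution on $\mathcal A(\mathbf s,\mathbf t)$, which is not a product measure, so Chernoff/Bernstein bounds are unavailable; instead one must use switching arguments (in the style of McKay and Wang) or asymptotic enumeration formulas (of Canfield–McKay type) to show that conditioning on part of $M$ barely perturbs the rest. This is the technical heart of the proof, it is what forces the near-regularity hypotheses $|s_i-s|=\mathcal O(n^{1/2+\eps})$, $|t_j-t|=\mathcal O(m^{1/2+\eps})$ and $n\le m=o(n^{1+\eps})$, and the interplay between the strength of concentration these tools yield and the $n\log n$ loss in the union bound over column orderings is what produces the $\mathcal O(m^{7/4+\eps})$ error term (slightly weaker than the $\mathcal O(k^{7/4})$ of Cavenagh and Ramadurai in their special case). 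A secondary point to keep in view is that the argument genuinely needs trades of all sizes, not just $2\times2$ intercalates: once $\lambda$ exceeds about $1/3$ the intercalates alone are too sparse to force the bound, so it is essential that the trade-free/acyclicity reformulation of $\sds(M)$ captures long alternating cycles as well.
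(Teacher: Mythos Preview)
In the paper the corollary has no separate proof: it is the immediate specialisation of Theorem~\ref{main-result} to $m=n=2k$ and constant row/column sums $k$. What you have sketched is an alternative route to (a special case of) that theorem. Your trade/acyclic-digraph reformulation of defining sets is exactly the paper's Theorem~\ref{goodform} --- a South--East walk in a good-form partial matrix is nothing but a topological sort of your digraph --- but thereafter the two arguments diverge. You decompose the maximal trade-free set row by row into walk-maxima $g_r(\sigma)$ and take a union bound over the $n!$ column orderings~$\sigma$; the paper's Lemma~\ref{NickCLemma} instead covers the region below the walk by $\mathcal O(m^{1/4})$ rectangular blocks and needs only that every subarray satisfies $|\delta(M[R,C])|\le\Delta$. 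The block cover wastes $\mathcal O(nm^{3/4})$ boundary cells, which is exactly where the $m^{7/4}$ in the error comes from; your row-wise decomposition has no such boundary loss and, if the probabilistic step were carried out, would plausibly give a smaller error than the $m^{7/4+\eps}$ you claim.

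The genuine gap is that probabilistic step. You invoke a bounded-difference inequality for the \emph{uniform} measure on $\mathcal A({\bf s},{\bf t})$ via switchings, but no such inequality is available off the shelf in the dense regime: one would need a quantitative log-Sobolev or spectral-gap bound for the switch chain, or a bespoke exchangeable-pairs argument, and none of this is routine. The paper avoids this completely. It transfers to the independent-edge model $G(n,m,\lambda)$ via the conditioning inequality
$\mathbb P_{{\bf s},{\bf t}}(\mathcal P)\le\mathbb P_\lambda(\mathcal P)/\mathbb P_\lambda(E_{{\bf s},{\bf t}})$,
uses the Canfield--Greenhill--McKay enumeration (Theorem~\ref{approx}) to show $\mathbb P_\lambda(E_{{\bf s},{\bf t}})\ge\exp\bigl(-\mathcal O(mn^{2\eps}+nm^{2\eps})\bigr)$, and then bounds $\mathbb P_\lambda(\mathcal P)$ by a plain Chernoff union bound over the $2^{m+n}$ pairs $(A,B)$ of row/column subsets. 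That same transfer device would close your gap as well: in $G(n,m,\lambda)$ the rows are independent, so for fixed~$\sigma$ the summands $g_r(\sigma)$ are i.i.d.\ sub-Gaussian with variance proxy $\mathcal O(n)$, giving $\Pr\bigl[\sum_r g_r(\sigma)>m\mu+t\bigr]\le\exp(-ct^2/(mn))$, which comfortably beats both the $n!$ union bound and the transfer cost. There is no need for switching-chain concentration.
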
 
We refer to the parameter $\lambda$ in Theorem~\ref{main-result} as the density of
$M \in \mathcal{A}({\bf s},{\bf t})$. It is the proportion of entries in $M$ which equal
one. Throughout this paper we require $\lambda$ to be bounded away from
zero. Our approach relies on an asymptotic formula from \cite{Canfield} for the number of
bipartite graphs with a given degree sequence. Similar enumeration
results do exist for the very sparse range~\cite{GMW06}, but the
intermediate range is not yet covered.  This is why we decided to not consider the
case when $\lambda\rightarrow0$.
Furthermore, we will assume that $\lambda \leq 1/2$. Without that
assumption, our problem
has symmetry between zeros and ones in the sense that we may
switch zeros and ones without changing the size of the smallest
defining set.  We can easily form a defining set for any matrix $M$ by taking
either all the ones or all the zeros in $M$.  The smaller of
these two options turns out to provide a good upper bound on the
size of the smallest defining set. The justification for legislating
that $\lambda\le1/2$ is that it simplifies the exposition if we know that
the number of ones does not exceed the number of zeros.
We then get good estimates by observing that 
the minimum size of a defining set for $M\in\mathcal{A({\bf s},{\bf t})}$
cannot exceed $\lambda m n$, and
maxsds$({\bf s},{\bf t}) \leq \lambda m n$.  We note that the case
where $\lambda>1/2$ would be easily handled by replacing $\lambda$ with
$1-\lambda$ in the appropriate places, but our statements are simpler
if we do not need to say this each time. For similar reasons, we assume
throughout that $n\le m$.

Since every matrix in $\mathcal{A}({\bf s},{\bf t})$ has a defining set of size $\lambda mn$,
another way to state the conclusion of Theorem \ref{main-result} is
that sds$(M)=\lambda mn-\mathcal{O}(m^{7/4+\eps})$ for
almost all $M\in\mathcal{A}({\bf s},{\bf t})$. It also follows that
$\maxsds({\bf s},{\bf t})=\lambda mn-\mathcal{O}(m^{7/4+\eps})$. These results
are limited to the case when $m,n,{\bf s},{\bf t}$ satisfy the hypotheses of our theorem. Since every $m\times n$ binary matrix has a defining set of size at most $mn/2$, we can also say:

\begin{corollary}\label{cy:maxsdsn}
For $n\le m\le o(n^{1+\eps})$,  we have $\maxsds(n,m)=nm/2 - \mathcal{O}(m^{7/4+\eps})$.
\end{corollary}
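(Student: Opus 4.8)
The plan is to prove matching upper and lower bounds. The upper bound $\maxsds(n,m)\le nm/2$ is immediate and was already noted above: for any $n\times m$ binary matrix $M$, both the set of its $1$-entries and the set of its $0$-entries are defining sets, and the smaller of the two has size at most $nm/2$. Since transposing $M$ transposes its defining sets, $\sds$ is transpose-invariant and $\maxsds(n,m)=\maxsds(m,n)$, so it remains to show that, for all sufficiently large $m,n$ with $n\le m=o(n^{1+\eps})$, some $m\times n$ binary matrix has no defining set of size less than $mn/2-\mathcal O(m^{7/4+\eps})$.

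For the lower bound I would apply Theorem~\ref{main-result} with density as close to $1/2$ as parity allows. Set $N=\lfloor mn/2\rfloor$; since $N$ lies in both intervals $[m\lfloor n/2\rfloor,m\lceil n/2\rceil]$ and $[n\lfloor m/2\rfloor,n\lceil m/2\rceil]$, we may choose integer vectors ${\bf s}$ with $s_i\in\{\lfloor n/2\rfloor,\lceil n/2\rceil\}$ and $\sum_i s_i=N$, and ${\bf t}$ with $t_j\in\{\lfloor m/2\rfloor,\lceil m/2\rceil\}$ and $\sum_j t_j=N$. All hypotheses of Theorem~\ref{main-result} hold for this pair: $\sum_i s_i=\sum_j t_j$; $|s_i-s|\le1=\mathcal O(n^{1/2+\eps})$ and $|t_j-t|\le1=\mathcal O(m^{1/2+\eps})$; the density $\lambda=N/(mn)$ satisfies $\tfrac12-\tfrac1{2mn}\le\lambda\le\tfrac12$, so $\lambda\le1/2$ and $\lambda$ is bounded away from $0$; and since $(1-2\lambda)^2\le (mn)^{-2}$ while $4\lambda(1-\lambda)\to1$ and $1+\tfrac{5m}{6n}+\tfrac{5n}{6m}=\mathcal O(m^\eps)$, the left-hand side of the displayed inequality in Theorem~\ref{main-result} tends to $0$ and is eventually at most $(\log m)/3$. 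Finally $\mathcal A({\bf s},{\bf t})\ne\varnothing$, since a near-regular bipartite degree sequence with equal part-sums is realisable (by the Gale--Ryser criterion, or by an explicit near-circulant construction).

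Theorem~\ref{main-result} now tells us that almost all --- hence, for all large $m,n$, at least one --- matrix $M\in\mathcal A({\bf s},{\bf t})$ has no defining set of size less than $\lambda mn-\mathcal O(m^{7/4+\eps})$, and since $\lambda mn=N\ge mn/2-\tfrac12$ this equals $mn/2-\mathcal O(m^{7/4+\eps})$. Therefore $\maxsds(m,n)\ge\maxsds({\bf s},{\bf t})\ge mn/2-\mathcal O(m^{7/4+\eps})$, which together with the upper bound and transpose-invariance gives $\maxsds(n,m)=nm/2-\mathcal O(m^{7/4+\eps})$.

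All of the substance is supplied by Theorem~\ref{main-result}; the only thing needing care is that the value of $\lambda$ forced on us by parity stays inside that theorem's hypotheses. The only hypothesis at all sensitive to $\lambda$ is the displayed inequality, whose left side is quadratically small in $1-2\lambda$ and so negligible compared with $(\log m)/3$ --- every other hypothesis is insensitive to the exact value of $\lambda$ provided it is bounded away from $0$, which here it is. So I anticipate no real obstacle, only this routine check and the (standard) nonemptiness of $\mathcal A({\bf s},{\bf t})$.
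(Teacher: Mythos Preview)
Your proposal is correct and follows exactly the approach the paper intends: the upper bound $mn/2$ is the trivial one noted just before the corollary, and the lower bound comes from applying Theorem~\ref{main-result} with $\lambda$ as close to $1/2$ as parity permits. The paper leaves this deduction implicit, whereas you have carefully verified the hypotheses (in particular that the displayed inequality is satisfied since $(1-2\lambda)^2$ is negligible) and the nonemptiness of $\mathcal{A}({\bf s},{\bf t})$; these checks are routine but accurate.
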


Cavenagh \cite{Cav2013} and Cavenagh and Wright \cite{CavWright18}
studied {\em critical sets}, that is, defining sets which are minimal
in the sense that the removal of any element destroys the property of
being a defining set. They showed that the complement of a critical
set is itself a defining set. Therefore Theorem~\ref{main-result}
implies that almost all binary matrices contain no large critical
set. More specifically:

\begin{corollary}\label{cy:critset}
  Under the hypotheses of Theorem~$\ref{main-result}$, almost all
  elements of $\mathcal{A}({\bf s},{\bf t})$ have no critical set of
  size more than $(1-\lambda)mn+\mathcal{O}(m^{7/4+\eps})$.
\end{corollary}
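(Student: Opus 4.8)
The plan is to derive Corollary~\ref{cy:critset} directly from Theorem~\ref{main-result}, using the duality between critical sets and defining sets established by Cavenagh~\cite{Cav2013} and Cavenagh and Wright~\cite{CavWright18}. Recall their result that the complement of a critical set is again a defining set: in the present notation, if $C\subseteq M$ is a critical set for some $M\in\mathcal{A}({\bf s},{\bf t})$, then $M\setminus C$ is a defining set for $M$. Since the nonempty cells of $C$ and of $M\setminus C$ partition the $mn$ cells of the matrix, we have $|C|+|M\setminus C|=mn$.

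Write $g(m)$ for the function hidden in the $\mathcal{O}(m^{7/4+\eps})$ term appearing in the conclusion of Theorem~\ref{main-result}, so that almost all $M\in\mathcal{A}({\bf s},{\bf t})$ have the property that every defining set of $M$ has size at least $\lambda mn-g(m)$. I would prove the corollary with this same $g(m)$ as its error term. Indeed, if $M$ has the stated property yet possessed a critical set $C$ with $|C|>(1-\lambda)mn+g(m)$, then $M\setminus C$ would be a defining set of size $mn-|C|<mn-(1-\lambda)mn-g(m)=\lambda mn-g(m)$, a contradiction. Hence the event ``$M$ has no defining set smaller than $\lambda mn-g(m)$'' is contained in the event ``$M$ has no critical set larger than $(1-\lambda)mn+g(m)$'', and the latter therefore has probability tending to $1$ by Theorem~\ref{main-result}.

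There is no substantial obstacle here beyond Theorem~\ref{main-result} itself; the only points that need care are to apply the Cavenagh--Wright complementation statement in the correct direction (it is the complement of a \emph{critical} set, not of an arbitrary defining set, that is guaranteed to be a defining set) and to note that the sizes of $C$ and $M\setminus C$ are genuinely complementary within the $mn$ cells, so that the two error terms coincide.
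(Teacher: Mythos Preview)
Your proposal is correct and follows exactly the paper's approach: the paper does not give a separate proof of this corollary but simply remarks (immediately before stating it) that Cavenagh and Cavenagh--Wright showed the complement of a critical set is a defining set, so Theorem~\ref{main-result} yields the bound on critical set sizes by complementation. Your write-up spells out this one-line complementation argument in full, with the same key ingredient and the same arithmetic.
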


\section{Preliminary results}

In this section we provide some preliminary results used in the proof of Theorem
\ref{main-result}. We utilise the following elegant
characterisation of defining sets from \cite{CavRam18}. It uses the
idea of a South-East walk tracing through a matrix using steps to the
right or downward. Such a walk separates the entries of the matrix into
two classes: those above (and to the right of) the walk and those below (and to the
left of) the walk. In particular, no entry lies on the walk itself.
We say a partial matrix $M \in \mathcal{A}'({\bf s},{\bf t})$ is in
\emph{good form} if whenever $M_{i,j}=1$ and $M_{i,j'}=0$ then $j < j'$
and whenever $M_{i,j}=0$ and $M_{i',j}=1$ then $i < i'$. In other
words, a partial matrix $M \in \mathcal{A}'({\bf s},{\bf t})$ is in
good form if a South-East walk in $M$ exists
with only ones (or empty cells) below the walk and only zeros (or
empty cells) above it.

\begin{theorem}\label{goodform}
Let $M \in \mathcal{A}({\bf s},{\bf t})$ and let $D \in \mathcal{A}'({\bf s},{\bf t})$. Then $D$ is a defining set for $M$ if and only if $D \subseteq M$ and the rows and columns of the partial matrix $M \setminus D$ can be permuted to be in good form.
\end{theorem}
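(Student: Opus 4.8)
The plan is to run both implications through the standard notion of a trade. Call a set $T$ of cells of $M$ a \emph{trade} if it is the cell set of a cycle in $K_{m,n}$ on which the entries of $M$ alternate $1,0,1,0,\dots$; concretely, $T$ can be listed as a cyclic sequence of cells on which $M$ alternates in value and in which consecutive cells share, alternately, a row or a column. Since row and column sums are fixed, two distinct members $M,M'$ of $\mathcal{A}({\bf s},{\bf t})$ differ on a set of cells that meets every row and every column in equally many $1$-entries and $0$-entries of $M$; following an alternating walk through that set until it first revisits a vertex, and then shortcutting, yields a trade contained in it, while conversely switching $0\leftrightarrow 1$ along any trade preserves all row and column sums. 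Assuming, as we may, that $D\subseteq M$, it follows that $D$ \emph{fails} to be a defining set for $M$ if and only if $M$ has a trade $T$ every cell of which is $\star$ in $D$, equivalently a filled cell of $M\setminus D$: given such a $T$, switching along it fixes every non-$\star$ cell of $D$ and produces a second completion of $D$; conversely any second completion supplies such a $T$. So Theorem~\ref{goodform} reduces to the assertion that $M\setminus D$ has no filled trade if and only if its rows and columns can be permuted into good form.

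For the ``if'' direction, suppose $M\setminus D$ has been permuted into good form and, for contradiction, contains a filled trade $T$. Traverse $T$ as a cyclic cell sequence. At each step where consecutive cells share a row, the cell carrying $1$ lies strictly left of the one carrying $0$, by the first good-form condition; at each step where they share a column, the cell carrying $0$ lies strictly above the one carrying $1$, by the second. Chaining these strict comparisons once around the cycle yields, after reversing the traversal if necessary, a strictly increasing cyclic chain $r_1<r_2<\cdots<r_k<r_1$ among the (distinct) rows that $T$ meets, where $k\ge 2$; this is absurd. Hence a matrix in good form has no filled trade, and by the reduction $D$ is a defining set.

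For the ``only if'' direction, assume $D$ is a defining set, so $M\setminus D$ has no filled trade, and we must exhibit permutations putting $M\setminus D$ in good form. Define a relation on columns by $c\prec c'$ whenever some row of $M\setminus D$ holds a $1$ in column $c$ and a $0$ in column $c'$, and a relation on rows by $r\prec r'$ whenever some column holds a $0$ in row $r$ and a $1$ in row $r'$. If both relations are acyclic then any linear extensions reorder $M\setminus D$ so that both good-form conditions hold, as required. Suppose instead the column relation has a directed cycle $c_1\prec c_2\prec\cdots\prec c_k\prec c_1$; choose witnessing rows $r_1,\dots,r_k$ with $(M\setminus D)_{r_i,c_i}=1$ and $(M\setminus D)_{r_i,c_{i+1}}=0$ (indices mod $k$). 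Then the cells listed cyclically as $(r_1,c_1),(r_1,c_2),(r_2,c_2),(r_2,c_3),\dots,(r_k,c_k),(r_k,c_1)$ form a closed walk in $M\setminus D$ on which entries alternate and consecutive cells alternately share a row or a column. Shortcutting this walk at any repeated cell and recursing terminates at a simple such cycle of length at least $4$ — length $2$ is impossible, since two distinct cells cannot share both a row and a column — and that cycle is a filled trade, contradicting our assumption. The row relation is handled symmetrically.

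The delicate point is this last extraction. The closed walk arising from a precedence cycle need not be simple: the $c_i$ are distinct, but the witnessing rows $r_i$ may coincide, so one must check that shortcutting genuinely bottoms out at a trade rather than at a degenerate configuration, and that each shortcut preserves both the alternation of entries and the alternation of row-/column-steps — which it does, because a repeated cell necessarily occurs at two positions of the same parity along the walk. Everything else is routine bookkeeping; in particular, since ``good form'' is \emph{defined} by the two monotonicity conditions on the positions of the $1$'s and $0$'s, once those are arranged there is nothing further to verify, and the separating South-East walk can simply be read off.
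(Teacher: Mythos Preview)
The paper does not prove Theorem~\ref{goodform}; it quotes the result from Cavenagh and Ramadurai~\cite{CavRam18} and uses it as a black box. So there is no in-paper proof to compare against.

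On its own merits your argument is essentially correct and follows the standard trade-based route. The reduction to ``$D$ is defining iff $M\setminus D$ contains no filled trade'' is right, the ``if'' direction is clean (chasing the good-form inequalities around a cycle gives a cyclic strict chain of row indices), and the ``only if'' direction via acyclicity of the row and column precedence relations, followed by linear extensions, is the right idea.

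There is one imprecision worth tightening. In extracting a trade from the closed walk built out of a precedence cycle $c_1\prec\cdots\prec c_k\prec c_1$, you speak of shortcutting at a \emph{repeated cell}, and then argue that repeats occur only at positions of the same parity. In fact no cell repeats: the columns $c_1,\dots,c_k$ are distinct, and a coincidence $(r_i,c_{i+1})=(r_j,c_j)$ would force $j=i+1$ and $r_i=r_{i+1}$, making one cell simultaneously $0$ and $1$. What can repeat is a \emph{vertex} of $K_{m,n}$, namely a row $r_i=r_j$ with $|i-j|\ge 2$. The shortcut should therefore be taken between two occurrences of a repeated row in the vertex walk $c_1,r_1,c_2,r_2,\dots,c_k,r_k,c_1$; the edge immediately before the first occurrence has value $1$ and the edge immediately after the second has value $0$, so alternation survives, and since $r_i\ne r_{i+1}$ the resulting simple cycle has length at least $4$. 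Once you phrase it this way the ``delicate point'' disappears. Everything else is fine.
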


The family of matrices constructed in \cite{CavRam18} have the special property that within any rectangular subarray the difference between the number of ones and zeros is small. This property, combined with Theorem~\ref{goodform}, guarantees no small defining set. In our more general setting, we are interested in the property that the difference between the number of ones and the expected number of ones in any subarray is small. Here, and henceforward, when we refer to the expected number of ones occupying a particular set of cells, the underlying distribution involves a matrix being chosen uniformly at random from all binary matrices with given dimensions and density.

Let $R$ and $C$ be any subsets of the rows and columns, respectively, of $M \in \mathcal{A}({\bf s},{\bf t})$. Let $\lambda$ be the density of $M$. Then the subarray $M[R,C]$ is the $|R| \times |C|$ array of $M$ induced by $R$ and $C$ and $\delta(M[R,C])$ denotes the number of ones in $M[R,C]$ minus $\lambda |R| |C|$, which is the expected number of ones in $M[R,C]$.

\begin{lemma}\label{NickCLemma}
Fix $\eps'>0$ and let $\Delta$ be a function of integers $m$ and
$n$.  Let $M \in \mathcal{A}({\bf s},{\bf t})$ have a density
$\lambda$ satisfying $\eps'\le\lambda\le1/2$. Let $D$ be a
defining set for $M$. If for every $R$ and $C$, subsets of the rows
and columns of $M$, respectively, we have
\begin{equation}\label{e:Delbnd}
  \big|\delta(M[R,C])\big| \leq \Delta(m,n),
\end{equation}
then $|D| \geq \lambda m n -\mathcal{O}(m^{7/4}+m^{1/4}\Delta(m,n)).$
\end{lemma}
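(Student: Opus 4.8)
The plan is to use the characterisation from Theorem~\ref{goodform}: if $D$ is a defining set for $M$, then after permuting rows and columns the partial matrix $M\setminus D$ is in good form, i.e.\ there is a South-East walk with only ones (or empty cells) strictly below it and only zeros (or empty cells) strictly above it. Write $R$ for the set of rows and $C$ for the set of columns, and let $r=|R|=m$, $c=|C|=n$. After the permutation, the walk is a monotone staircase from the top-left to the bottom-right corner of the $m\times n$ grid. The key point is that every cell of $M\setminus D$ that is \emph{below} the walk but contains a $0$ in $M$ must in fact be empty in $M\setminus D$, hence must be filled in $D$; similarly every cell above the walk containing a $1$ in $M$ must be in $D$. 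So $|D|$ is at least (number of zeros of $M$ lying below the walk) $+$ (number of ones of $M$ lying above the walk). Equivalently, writing $Z$ for the region below the walk and $O$ for the region above it (so $Z\cup O$ partitions all $mn$ cells), we have $|D|\ge z_0+o_1$ where $z_0$ is the number of zeros in $M[Z]$ and $o_1$ is the number of ones in $M[O]$.

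Next I would convert this into a bound using $\delta$. The total number of ones in $M$ is $\lambda mn$. If $O$ contains $o_1$ ones and $|O|$ cells total, then $Z$ contains $\lambda mn-o_1$ ones out of $|Z|=mn-|O|$ cells, so the number of zeros in $Z$ is $z_0=mn-|O|-(\lambda mn-o_1)=(1-\lambda)mn-|O|+o_1$. Hence
$$
|D|\ \ge\ z_0+o_1\ =\ (1-\lambda)mn-|O|+2o_1 .
$$
Now $o_1=\lambda|O|+\delta(M[O])$ where, strictly speaking, $M[O]$ is not a subarray induced by a set of rows and columns but a staircase region; however $O$ is a disjoint union of at most $\min(m,n)+1$ rectangular blocks $M[R_k,C_k]$ (the blocks cut out by the staircase), so $\delta(M[O])=\sum_k\delta(M[R_k,C_k])$ and by \eqref{e:Delbnd} we get $|\delta(M[O])|\le (\min(m,n)+1)\Delta(m,n)=\mathcal{O}(n\Delta(m,n))$. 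Substituting,
$$
|D|\ \ge\ (1-\lambda)mn-|O|+2\lambda|O|+2\delta(M[O])\ =\ (1-\lambda)mn-(1-2\lambda)|O|+2\delta(M[O]).
$$
Since $\lambda\le1/2$ we have $1-2\lambda\ge0$, so this bound is weakest when $|O|$ is as large as $mn$; that just gives $|D|\ge\lambda mn+2\delta(M[O])=\lambda mn-\mathcal{O}(n\Delta)$, which is already the right order if $\Delta$ were small, but $\mathcal{O}(n\Delta)$ is worse than the claimed $\mathcal{O}(m^{7/4}+m^{1/4}\Delta)$.

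So the main obstacle, and the step requiring real work, is to sharpen the bound on the ``wasted'' region. The issue is that when $|O|$ is large, the staircase is close to the top-right corner, so $O$ is a thin sliver and the decomposition into rectangles must be done more carefully: most of the blocks $M[R_k,C_k]$ are small, and $\sum_k|R_k||C_k|=|O|$ while $\sum_k(|R_k|+|C_k|)$ controls the ``length'' of the staircase. The idea is to bound $|\delta(M[R_k,C_k])|$ not by $\Delta$ but by $\min(\Delta,\text{something like }\sqrt{|R_k||C_k|}\log)$ or simply by $|R_k||C_k|$ for very small blocks, and to optimise the staircase shape. Concretely, I would argue that the adversary's best staircase is (close to) the anti-diagonal region with $|O|\approx (1-\lambda)mn$ balanced against the cost, and track how the $(1-2\lambda)|O|$ term trades off against the reservoir of zeros available. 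Writing $p$ for the number of columns fully inside $O$ and $q$ for those fully inside $Z$, one gets a clean optimisation in one or two real parameters; the hypothesis
$$
\frac{(1-2\lambda)^2}{4\lambda(1-\lambda)}\Bigl(1+\frac{5m}{6n}+\frac{5n}{6m}\Bigr)\le\frac{\log m}{3}
$$
from Theorem~\ref{main-result} is not needed here (it is used elsewhere, to guarantee matrices with the $\delta$-boundedness property exist), so in Lemma~\ref{NickCLemma} itself I only use $\eps'\le\lambda\le1/2$. After performing this optimisation and absorbing the $\mathcal{O}(m^{7/4})$ slack (which comes from the length of the staircase, $\mathcal{O}(m)$, times a per-step loss of $\mathcal{O}(m^{3/4})$, itself arising from balancing block sizes against $\Delta$), one obtains $|D|\ge\lambda mn-\mathcal{O}(m^{7/4}+m^{1/4}\Delta(m,n))$ as claimed.
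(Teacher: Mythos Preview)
Your setup is correct and matches the paper: reduce via Theorem~\ref{goodform} to a South-East walk, observe that $D$ must contain every one above the walk and every zero below it, so $|D|\ge \alpha_1+\beta_0$ in the paper's notation (your $o_1+z_0$), and then the whole game is to bound $\delta$ of the staircase region $Z$ (equivalently $O$). You also correctly diagnose that the naive decomposition of the staircase into $\mathcal{O}(n)$ rectangles only gives $\mathcal{O}(n\,\Delta)$, which is too weak.

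The gap is in what comes next. There is no optimisation over staircases: the walk is determined by $M\setminus D$, not chosen by you or by an adversary, and you must produce a bound on $|\delta(M[Z])|$ valid for \emph{every} staircase. The talk of ``the adversary's best staircase'', the trade-off with the $(1-2\lambda)|O|$ term, and the parameters $p,q$ are red herrings; the $(1-2\lambda)|O|$ term in your display is harmless (it is nonnegative once rewritten as $(1-2\lambda)|Z|$) and plays no role in the error. What is actually missing is the concrete block decomposition that turns $\mathcal{O}(n)$ rectangles into $\mathcal{O}(m^{1/4})$ rectangles at the cost of an uncovered strip of $\mathcal{O}(m^{7/4})$ cells. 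The paper does this explicitly: let $h=\lceil m^{3/4}\rceil$, let $f(i)$ be the number of cells in row $i$ to the left of the walk, and for $1\le i\le\lceil m^{1/4}\rceil$ take the rectangle $B_i=M[\{ih,\dots,m\},\{f((i-1)h)+1,\dots,f(ih)\}]$. These $\lceil m^{1/4}\rceil$ rectangles lie inside $Z$, are pairwise disjoint, and together miss at most $h$ cells per column, hence at most $nh\le m^{7/4}$ cells of $Z$. Applying \eqref{e:Delbnd} to each $B_i$ and the trivial bound to the leftover gives $|\delta(M[Z])|=\mathcal{O}(m^{7/4}+m^{1/4}\Delta)$, and then $\beta_0\ge\beta_1-\mathcal{O}(\cdot)$ (using $\lambda\le 1/2$ and $1/\lambda=\mathcal{O}(1)$) finishes as you indicated. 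Your sketch never supplies this decomposition, and the alternative you float, bounding small blocks by $\sqrt{|R_k||C_k|}\log$, is not available from the hypothesis.
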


\begin{proof}
Let $M \in \mathcal{A}({\bf s},{\bf t})$ be such that $|\delta(M[R,C])| \leq \Delta(m,n)$ for any subsets $R$ and $C$ of the rows and columns, respectively. Let $D \subseteq M$ be a minimal defining set for $M$. We show that the size of $D$ cannot be less than $\lambda m n-\mathcal{O}(m^{7/4}+m^{1/4}\Delta(m,n))$. By Theorem \ref{goodform} we can assume that the rows and columns of $M$ have been permuted so that $M \setminus D$ is in good form.
That is,  we can draw a South-East walk $\mathscr{W}$ in the matrix $M \setminus D$ so that all non-empty cells above $\mathscr{W}$ are zeros and all non-empty cells below $\mathscr{W}$ are ones. Since $D$ is minimal, $M \setminus D$ must contain every one that occurs in $M$ below $\mathscr{W}$ and every zero that occurs in $M$ above $\mathscr{W}$. 

Let $\alpha_0$ and $\alpha_1$ denote the number of zeros and ones (respectively) in $M$ above $\mathscr{W}$, and let $\beta_0$ and $\beta_1$ denote the number of zeros and ones (respectively) in $M$ below $\mathscr{W}$. Hence, we have 
\begin{align*}
\alpha_1+\beta_1 &= \lambda m n, \text{ and}\\
|D| &= \alpha_1+\beta_0.
\end{align*}

We now find an upper bound on
$|\beta_1-\lambda(\beta_1+\beta_0)|$, which is the number of ones
minus the expected number of ones in $M$ below $\mathscr{W}$. For $1\le i\le m$, define $f(i)$ to be the number of cells in row $i$ to the left of
$\mathscr{W}$ and let $f(0)=0$. By definition, the sequence $f(0),\dots,f(m)$ is weakly
increasing. Let $h=\ceil[\big]{m^{3/4}}$. For $1\le i\le \lceil m^{1/4}\rceil$, define a block
$B_i=M[R_i,C_i]$ where 
$R_i=\{ih,\dots,m\}$
and $C_i=\big\{f\big((i-1)h\big)+1,\dots,f(ih)\big\}$. Note that each block $B_i$ lies entirely below 
$\mathscr{W}$ and is disjoint from $B_j$ for $j\ne i$. Moreover,
in any column there are at most $h$ cells that are below $\mathscr{W}$
but are not in any of the $B_i$. For these cells, the difference between the
number of ones and the expected number of ones cannot exceed $nh$, the
total number of cells involved. For each block $B_i$, we then employ the bound
(\ref{e:Delbnd}) to give
$$|\beta_1 - \lambda(\beta_1 + \beta_0)| = nh+\lceil m^{1/4}\rceil\Delta(m,n)
=\mathcal{O}(nm^{3/4}) + \mathcal{O}(m^{1/4}\Delta(m,n)).$$
Now $n \leq m$ and $\lambda \leq 1/2$ with $1/\lambda=\mathcal{O}(1)$, so
\begin{align*}
\beta_ 0 & =  \dfrac{1-\lambda}{\lambda} \beta_1 - \mathcal{O}(m^{7/4}+m^{1/4}\Delta(m,n)) 
\geq \beta_1 - \mathcal{O}(m^{7/4}+m^{1/4}\Delta(m,n)).
\end{align*}
It follows that
\begin{align*}
|D| & =  \beta_0 + \alpha_1 
\geq  \beta_1 + \alpha_1 - \mathcal{O}(m^{7/4}+m^{1/4}\Delta(m,n)) 
= \lambda m n  - \mathcal{O}(m^{7/4}+m^{1/4}\Delta(m,n))
\end{align*}
as claimed.
\end{proof}

Let $\mathcal{N}({\bf s},{\bf t})$ be the number of labelled bipartite graphs with $m$ vertices on one side of the
bipartition with degrees given by ${\bf s}$, and $n$ vertices on the
other side with degrees given by ${\bf t}$. We utilise the following asymptotic estimate from \cite{Canfield}.

\begin{theorem} \label{approx}
Let $m,n,{\bf s},{\bf t},\lambda, A$, and $\eps$ be defined as in Theorem \ref{main-result}. Then we have
$$\mathcal{N}({\bf s},{\bf t}) = {mn \choose \lambda mn}^{-1}  \prod^{m}_{i=1} {n \choose s_i}  \prod^{n}_{j=1} {m \choose t_j}\exp \left( - \mathcal{O}((mn)^{2 \eps}) \right).$$
\end{theorem}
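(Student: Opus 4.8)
The plan is to obtain Theorem~\ref{approx} as a specialisation of the main enumeration result of \cite{Canfield}. First, note the standard bijection: a labelled bipartite graph on parts of sizes $m$ and $n$ with degree sequences ${\bf s}$ and ${\bf t}$ is precisely an $m\times n$ binary matrix (its bipartite adjacency matrix) with row sums ${\bf s}$ and column sums ${\bf t}$, so $\mathcal{N}({\bf s},{\bf t})=|\mathcal{A}({\bf s},{\bf t})|$ and it suffices to estimate the latter. Under appropriate hypotheses, \cite{Canfield} provides a formula
\[
|\mathcal{A}({\bf s},{\bf t})|={mn \choose \lambda mn}^{-1}\prod_{i=1}^m{n \choose s_i}\prod_{j=1}^n{m \choose t_j}\,\exp(E),
\]
in which $E$ is an explicit expression built from $\lambda$, the aspect ratio $m/n$, and the second moments $\sum_i(s_i-s)^2$, $\sum_j(t_j-t)^2$ of the two degree sequences, together with a genuine error term that is $o(1)$. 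Thus the proof reduces to two tasks: (i) checking that the hypotheses of Theorem~\ref{main-result} imply those required in \cite{Canfield}; and (ii) showing $|E|=\mathcal{O}((mn)^{2\eps})$.

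For (i), the result of \cite{Canfield} asks that $\lambda$ be bounded away from $0$ and $1$, that the degree sequences be semiregular in the sense $|s_i-s|=\mathcal{O}(n^{1/2+\eps})$ and $|t_j-t|=\mathcal{O}(m^{1/2+\eps})$, and that a range condition hold, to the effect that $(1-2\lambda)^2/(\lambda(1-\lambda))$, multiplied by a suitable aspect-ratio factor, is $\mathcal{O}(\log m)$. The density conditions are immediate, since $\lambda$ is bounded away from $0$ and $\lambda\le1/2$; the semiregularity conditions are precisely those we have assumed; and the range condition is what the technical inequality
\[
\frac{(1-2\lambda)^2}{4\lambda(1-\lambda)}\Bigl(1+\frac{5m}{6n}+\frac{5n}{6m}\Bigr)\le\frac{\log m}{3}
\]
of Theorem~\ref{main-result} is calibrated to supply, the factor $1+\frac{5m}{6n}+\frac{5n}{6m}$ having been chosen to dominate, up to a constant, the aspect-ratio combination appearing in \cite{Canfield}. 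Here one also uses $n\le m=o(n^{1+\eps})$, hence $m/n=o(n^{\eps})$, to keep the various ratios controlled.

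For (ii), one splits $E$ into its constituent pieces and bounds each separately, the $o(1)$ error being absorbed trivially. The ``global'' piece, coming from the $(1-2\lambda)^2/(4\lambda(1-\lambda))$ term weighted by the aspect ratio, is $\mathcal{O}(\log m)$ by the displayed hypothesis, and $\log m=\mathcal{O}((mn)^{2\eps})$. The ``irregularity'' pieces are handled by the semiregularity bounds: the $s$-contribution is controlled by $\sum_i(s_i-s)^2$ divided by a quantity of order $\lambda(1-\lambda)mn=\Theta(mn)$, hence is $\mathcal{O}(mn^{1+2\eps}/(mn))=\mathcal{O}(n^{2\eps})$, and symmetrically the $t$-contribution is $\mathcal{O}(m^{2\eps})$; both are $\mathcal{O}((mn)^{2\eps})$. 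Summing the finitely many pieces gives $|E|=\mathcal{O}((mn)^{2\eps})$, so $\exp(E)=\exp(-\mathcal{O}((mn)^{2\eps}))$ (the estimate being two-sided), which is the asserted formula.

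The bulk of this argument is mechanical book-keeping; the one step demanding genuine care is to match the conventions of \cite{Canfield} exactly: to confirm that the aspect-ratio factor $1+\frac{5m}{6n}+\frac{5n}{6m}$ really does cover the corresponding combination there with the stated constants, and that each term of their explicit correction factor is swallowed by $\exp(\pm\mathcal{O}((mn)^{2\eps}))$.
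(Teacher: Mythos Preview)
The paper does not actually prove Theorem~\ref{approx}; it is simply quoted as an asymptotic estimate from \cite{Canfield}. Your proposal correctly identifies this and supplies the verification that the hypotheses of Theorem~\ref{main-result} feed into those of the Canfield--Greenhill--McKay result, together with the bookkeeping showing that the explicit correction factor there is absorbed into $\exp(-\mathcal{O}((mn)^{2\eps}))$. The sketch is sound: the semiregularity bounds give $\sum_i(s_i-s)^2=\mathcal{O}(mn^{1+2\eps})$ and $\sum_j(t_j-t)^2=\mathcal{O}(nm^{1+2\eps})$, so the normalised second moments are $\mathcal{O}(n^{2\eps})$ and $\mathcal{O}(m^{2\eps})$ respectively, and the displayed inequality on $(1-2\lambda)^2/(4\lambda(1-\lambda))$ is exactly the range hypothesis needed in \cite{Canfield}. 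In short, your approach matches what the paper intends, only with the details spelt out rather than left to the citation.
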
 

Lastly, we need the following well-known results called the Chernoff bounds \cite{MitzUp05}. 

\begin{theorem}\label{chernoff}
Let $X_1,\dots,X_n$ be independent Bernoulli random variables where $X_i=1$
with probability $p_i$ and $X_i=0$ with probability $1-p_i$. Let
$X=\sum_{i=1}^n X_i$ and $\mu= \mathbb{E}(X)=\sum_{i=1}^n p_i$. Then
\begin{itemize}
\item[(i)] $\mathbb{P}\big(X \ge (1+\gamma)\mu\big)\le \exp(-\frac{\gamma^2}{2+\gamma}\mu)$ for all $\gamma>0$,
\item[(ii)] $\mathbb{P}\big(|X-\mu| \ge \gamma \mu\big) \le 2\exp(-\mu \gamma^2/3)$ 
for all $0<\gamma<1$.
\end{itemize}
\end{theorem}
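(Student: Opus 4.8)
The plan is to obtain both inequalities from the exponential form of Markov's inequality, exploiting independence to factorise the moment generating function. Fix $\gamma>0$ and introduce a free parameter $\theta$. For $\theta>0$, Markov's inequality applied to $e^{\theta X}$ gives
\[
\mathbb{P}\big(X\ge(1+\gamma)\mu\big)
 = \mathbb{P}\big(e^{\theta X}\ge e^{\theta(1+\gamma)\mu}\big)
 \le e^{-\theta(1+\gamma)\mu}\,\mathbb{E}\big(e^{\theta X}\big).
\]
By independence, $\mathbb{E}(e^{\theta X})=\prod_{i=1}^n\mathbb{E}(e^{\theta X_i})=\prod_{i=1}^n\big(1+p_i(e^\theta-1)\big)$, and the elementary bound $1+x\le e^x$ (applied with $x=p_i(e^\theta-1)$) yields $\mathbb{E}(e^{\theta X})\le\exp\!\big(\mu(e^\theta-1)\big)$. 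Hence $\mathbb{P}\big(X\ge(1+\gamma)\mu\big)\le\exp\!\big(\mu(e^\theta-1)-\theta(1+\gamma)\mu\big)$ for every $\theta>0$.

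Next I would optimise the exponent over $\theta$. The function $\theta\mapsto e^\theta-1-\theta(1+\gamma)$ is minimised at $\theta=\log(1+\gamma)>0$, which gives
\[
\mathbb{P}\big(X\ge(1+\gamma)\mu\big)\le\exp\!\Big(-\mu\big((1+\gamma)\log(1+\gamma)-\gamma\big)\Big).
\]
Part (i) then follows from the scalar inequality $(1+\gamma)\log(1+\gamma)-\gamma\ge\gamma^2/(2+\gamma)$, valid for all $\gamma>0$; one proves this by setting $g(\gamma)=(1+\gamma)\log(1+\gamma)-\gamma-\gamma^2/(2+\gamma)$, checking $g(0)=0$, and verifying $g'(\gamma)\ge 0$ for $\gamma\ge0$. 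For part (ii), the upper tail is immediate from (i): when $0<\gamma<1$ we have $\gamma^2/(2+\gamma)\ge\gamma^2/3$, so $\mathbb{P}(X\ge(1+\gamma)\mu)\le\exp(-\gamma^2\mu/3)$. For the lower tail one runs the same argument with $\theta<0$, obtaining $\mathbb{P}\big(X\le(1-\gamma)\mu\big)\le\exp\!\big(\mu(e^\theta-1)-\theta(1-\gamma)\mu\big)$, which the optimal choice $\theta=\log(1-\gamma)<0$ turns into $\exp\!\big(-\mu((1-\gamma)\log(1-\gamma)+\gamma)\big)$. The inequality $(1-\gamma)\log(1-\gamma)+\gamma\ge\gamma^2/2\ge\gamma^2/3$ (e.g. from $-\log(1-\gamma)=\sum_{k\ge1}\gamma^k/k$, whence the left side equals $\sum_{k\ge2}\gamma^k/(k(k-1))$) then gives $\mathbb{P}(X\le(1-\gamma)\mu)\le\exp(-\gamma^2\mu/3)$, and a union bound over the two tails yields $\mathbb{P}(|X-\mu|\ge\gamma\mu)\le2\exp(-\gamma^2\mu/3)$, which is (ii).

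There is no serious obstacle here: the result is classical and every structural step — the exponential Markov estimate, the factorisation through independence, the $1+x\le e^x$ bound, and the optimisation over $\theta$ — is routine. The only part that takes a little care is the pair of one-variable inequalities bounding $(1\pm\gamma)\log(1\pm\gamma)\mp\gamma$ below by a quadratic in $\gamma$, since both sides vanish to second order at $\gamma=0$ and, for the lower-tail inequality, the logarithm blows up as $\gamma\to1$; these are handled by the Taylor-series or monotonicity arguments indicated above.
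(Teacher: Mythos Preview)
Your argument is correct and is essentially the standard proof of the Chernoff bounds. Note, however, that the paper does not supply its own proof of this theorem: it is stated as a well-known result with a citation to Mitzenmacher and Upfal, so there is nothing to compare against beyond observing that your derivation matches the textbook treatment.
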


\section{Proof of the main result}\label{s:prfmain}

An element of $\mathcal{A}({\bf s},{\bf t})$ is the bi-adjacency
matrix of a bipartite graph with $m$ vertices on one side of the
bipartition with degrees given by ${\bf s}$, and $n$ vertices on the
other side with degrees given by ${\bf t}$. We define the density of a bipartite graph to be the density of its
bi-adjacency matrix.

Let $A$ and $B$ be subsets of the vertices of $G$, each from a different side and denote the number of edges between $A$ and $B$ by $e(A,B)$. The property (\ref{e:Delbnd}) is equivalent to the difference between the number of edges and the expected number of edges between $A$ and $B$ being at most $\Delta(m,n)$. Therefore, by Lemma \ref{NickCLemma}, the following theorem implies our main result, Theorem~\ref{main-result}.

\begin{theorem}\label{main}
Let $G({\bf s},{\bf t})$ be chosen uniformly at random from the bipartite graphs with one side of the bipartition of size $m$ with degrees from ${\bf s}$ and the other side of size $n$ with degrees from ${\bf t}$. Let $\lambda$ be the density of $G({\bf s},{\bf t})$ and suppose that $m$, $n$, ${\bf s}$, ${\bf t}$ and $\lambda$ satisfy the hypotheses of Theorem~$\ref{main-result}$. Then there is some constant $c>0$ such that,
with probability $1-o(1)$,
$$\big| e(A,B) - \lambda |A||B|\big |  \leq c(mn^{1/2+\eps}+nm^{1/2+\eps}),$$
for any two subsets $A$ and $B$ of the vertices, each from a different side.
\end{theorem}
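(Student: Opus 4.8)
The plan is to bound, for each fixed pair of vertex subsets $A,B$ lying on opposite sides of the bipartition, the probability of the event
$$\mathcal{E}=\mathcal{E}_{A,B}\colon\qquad \big|e(A,B)-\lambda|A||B|\big|>D,\qquad D:=c\bigl(mn^{1/2+\eps}+nm^{1/2+\eps}\bigr),$$
and then take a union bound over the at most $2^{m+n}\le 4^m$ such pairs. Since $4^m=e^{\mathcal{O}(m)}$ and $n^{2\eps}\to\infty$, it is enough to show $\mathbb{P}(\mathcal{E}_{A,B})\le\exp(-\Omega(mn^{2\eps}))$ for each fixed pair. Two kinds of pair need no probability: if $|A||B|\le D$ then $\big|e(A,B)-\lambda|A||B|\big|\le|A||B|\le D$, so $\mathcal{E}$ cannot occur; and if $A$ (or $B$) is all of its side, then $e(A,B)$ equals the sum of the degrees of the vertices of $B$ (resp.\ $A$), and the hypotheses $|s_i-s|=\mathcal{O}(n^{1/2+\eps})$, $|t_j-t|=\mathcal{O}(m^{1/2+\eps})$ already bound $\big|e(A,B)-\lambda|A||B|\big|$ by $D$ once $c$ is chosen large. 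So we may assume $(A,B)$ is not of either kind.

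The main device is a comparison with the independent model. Let $\mathcal{B}(\lambda)$ be the distribution on $m\times n$ binary matrices whose entries are independent, each equal to $1$ with probability $\lambda$. Every matrix in $\mathcal{A}(\mathbf{s},\mathbf{t})$ has exactly $\lambda mn$ ones and hence the same $\mathcal{B}(\lambda)$-weight, so conditioning $\mathcal{B}(\lambda)$ on the event $\mathcal{D}$ that the row and column sums are $\mathbf{s}$ and $\mathbf{t}$ recovers the uniform distribution on $\mathcal{A}(\mathbf{s},\mathbf{t})$. Therefore
$$\mathbb{P}_{\mathrm{unif}}(\mathcal{E})=\mathbb{P}_{\mathcal{B}(\lambda)}(\mathcal{E}\mid\mathcal{D})\le\frac{\mathbb{P}_{\mathcal{B}(\lambda)}(\mathcal{E})}{\mathbb{P}_{\mathcal{B}(\lambda)}(\mathcal{D})}.$$
In the numerator, $e(A,B)$ is under $\mathcal{B}(\lambda)$ a sum of $|A||B|\le mn$ independent $\mathrm{Bernoulli}(\lambda)$ variables with mean $\lambda|A||B|$, so Theorem~\ref{chernoff}, part~(ii) (or part~(i) in the rare case $D\ge\lambda|A||B|$), gives $\mathbb{P}_{\mathcal{B}(\lambda)}(\mathcal{E})\le 2\exp\bigl(-\tfrac13 D^2/(\lambda|A||B|)\bigr)\le 2\exp\bigl(-\tfrac{c^2}{3\lambda}mn^{2\eps}\bigr)$, using $D\ge c\,mn^{1/2+\eps}$.

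The substance of the proof is a sufficiently precise lower bound for $\mathbb{P}_{\mathcal{B}(\lambda)}(\mathcal{D})=\mathcal{N}(\mathbf{s},\mathbf{t})\,\lambda^{\lambda mn}(1-\lambda)^{(1-\lambda)mn}$. Inserting Theorem~\ref{approx} for $\mathcal{N}(\mathbf{s},\mathbf{t})$ and Stirling's formula for each binomial coefficient, the four leading $\exp(\Theta(mn))$ contributions to the exponent — $+h(\lambda)mn$ from each of $\prod_i\binom{n}{s_i}$ and $\prod_j\binom{m}{t_j}$ and $-h(\lambda)mn$ from each of $\binom{mn}{\lambda mn}^{-1}$ and $\lambda^{\lambda mn}(1-\lambda)^{(1-\lambda)mn}$, where $h$ is binary entropy — cancel exactly, and a local-limit (second order Taylor) expansion of $\log\binom{n}{s_i}$ and $\log\binom{m}{t_j}$ about the mean degrees, legitimate because $|s_i-s|=\mathcal{O}(n^{1/2+\eps})=o(n^{2/3})$ and $|t_j-t|=\mathcal{O}(m^{1/2+\eps})=o(m^{2/3})$, yields
$$\log\mathbb{P}_{\mathcal{B}(\lambda)}(\mathcal{D})=-\sum_{i=1}^{m}\frac{(s_i-s)^2}{2\lambda(1-\lambda)n}-\sum_{j=1}^{n}\frac{(t_j-t)^2}{2\lambda(1-\lambda)m}+o(mn^{2\eps}),$$
the residual logarithmic, linear and $(mn)^{2\eps}$ terms all being absorbed into $o(mn^{2\eps})$ with the help of $n\le m$. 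By the degree-variation hypotheses (and $nm^{2\eps}\le mn^{2\eps}$), both quadratic sums are at most $\tfrac{C^2}{\lambda(1-\lambda)}mn^{2\eps}$, where $C$ is the absolute constant with $|s_i-s|\le Cn^{1/2+\eps}$ and $|t_j-t|\le Cm^{1/2+\eps}$. Combining with the numerator bound,
$$\mathbb{P}_{\mathrm{unif}}(\mathcal{E})\le 2\exp\!\left(-\frac{mn^{2\eps}}{\lambda}\left(\frac{c^2}{3}-\frac{C^2}{1-\lambda}\right)+o(mn^{2\eps})\right),$$
and since $\lambda\le\tfrac12$ gives $1/(1-\lambda)\le 2$, fixing any $c$ with $c^2>6C^2$ makes the bracket a positive constant; thus $\mathbb{P}_{\mathrm{unif}}(\mathcal{E})=\exp(-\Omega(mn^{2\eps}))$, and the union bound over the $2^{m+n}$ pairs completes the proof.

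The step I expect to be the main obstacle is this lower bound on $\mathbb{P}_{\mathcal{B}(\lambda)}(\mathcal{D})$: one has to combine Theorem~\ref{approx} with Stirling carefully enough that the $\Theta(mn)$ bulk-entropy term cancels \emph{exactly} (an uncontrolled error of that size would be fatal), to see that the surviving exponent has order $mn^{2\eps}$ and no larger, and — most delicately — to keep the constant in that exponent under control so that it is beaten by the Chernoff exponent after $c$ is enlarged; this last point is exactly where the hypothesis $\lambda\le\tfrac12$ is used, beyond its role in making Theorem~\ref{approx} applicable. The remaining ingredients — the Chernoff estimate, the conditioning identity, the bookkeeping of lower-order terms, and the final union bound — are routine.
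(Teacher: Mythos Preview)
Your proof is correct and follows essentially the same approach as the paper: compare the uniform model to the independent $\mathcal{B}(\lambda)$ model via the conditioning inequality, lower-bound $\mathbb{P}_{\mathcal{B}(\lambda)}(\mathcal{D})$ using Theorem~\ref{approx} plus Stirling (the paper stops at $\exp(-\mathcal{O}(mn^{2\eps}+nm^{2\eps}))$ rather than extracting the explicit quadratic sums as you do), upper-bound the deviation probability via Chernoff, and then choose $c$ large enough. The only cosmetic difference is that the paper applies the union bound over $(A,B)$ \emph{inside} the numerator $\mathbb{P}_\lambda(\mathcal{P})$ before dividing by $\mathbb{P}_\lambda(\mathcal{D})$, whereas you divide first and take the union bound afterwards; since the denominator does not depend on $(A,B)$, the two orderings are equivalent.
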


Let $G(n,m,\lambda)$ be a random bipartite graph with sides of size $m$ and $n$, in which each of the $mn$ possible edges occurs independently with probability $\lambda$. Note that with respect to this graph, the expectation of $e(A,B)$ is $\lambda |A||B|$, for any two subsets $A$ and $B$ of the vertices of $G(n,m,\lambda)$, each from a different side. 

\begin{proof}[Proof of Theorem \ref{main}]

Fix a positive constant $c$.
We say a bipartite graph \emph{has property} $\mathcal{P}$, if there exist subsets $A$ and $B$, each from a different side, such that
$$\big | e(A,B) - \lambda |A||B|\big |  > c(mn^{1/2+\eps}+nm^{1/2+\eps}).$$
Let $\mathbb{P}_{{\bf s},{\bf t}}(\mathcal{P})$ denote the probability that $G({\bf s},{\bf t})$ has property $\mathcal{P}$ and let $\mathbb{P}_{\lambda}(\mathcal{P})$ be the probability that $G(n,m,\lambda)$ has property $\mathcal{P}$.
We define $E_{\bf s,t}$ to be the event that $G(m,n,\lambda)$ has degree sequence $({\bf s},{\bf t})$.
Then
$$
\mathbb{P}_{{\bf s},{\bf t}}(\mathcal{P})=\mathbb{P}_{\lambda}(\mathcal{P}\,|\,E_{{\bf s},{\bf t}}) 
\leq \frac{\mathbb{P}_{\lambda}(\mathcal{P})}{\mathbb{P}_{\lambda}(E_{{\bf s},{\bf t}})}.  
$$

We claim that $\mathbb{P}_{{\bf s}, {\bf t}}(\mathcal{P})$ goes to zero as $m,n \rightarrow \infty$. Firstly we find a lower bound on $\mathbb{P}_{\lambda}(E_{{\bf s},{\bf t}})$.  To simplify our calculations we let $\lambda' = (1-\lambda)$. Applying Stirling's formula to the binomials given in Theorem \ref{approx}, we have the following approximations, provided $m$, $n$, ${\bf s}$ and ${\bf t}$ satisfy the hypotheses of Theorem~\ref{main-result}:
\begin{align*}
{ mn  \choose \lambda mn} 
& =  \exp \Big( -(\lambda \log \lambda + \lambda' \log \lambda')mn -\mathcal{O}(\log(mn)) \Big),
\end{align*}
\begin{align*}
  \prod_{i=1}^m {n \choose s_i} & = \exp \bigg( mn\log n -\sum_{i=1}^m s_i \log s_i - \sum_{i=1}^m (n-s_i)\log(n-s_i) -\mathcal{O}(m\log n)\bigg).
\end{align*}
By assumption, for $1 \leq i \leq m$ we have $s_i=\lambda n + s_i'$ where $s_i' = \mathcal{O}(n^{1/2+\eps})$ and $\sum_{i=1}^m s_i' = 0$. Hence we have
\begin{align*}
\sum_{i=1}^m s_i \log s_i 
& = \lambda mn \log (\lambda n) + \sum_{i=1}^m (\lambda n + s_i ')\log \left(1+\dfrac{s_i'}{\lambda n}\right) 
= \lambda mn \log (\lambda n) + \mathcal{O}(mn^{2\eps}).
\end{align*}
Similarly,
\begin{align*}
 \sum_{i=1}^m (n-s_i) \log (n-s_i) 
= \lambda' mn \log (\lambda' n) + \mathcal{O}(mn^{2\eps}).
\end{align*}
Hence, we have
\begin{align*}
  \prod_{i=1}^m {n \choose s_i} 
 & = \exp \left(-(\lambda \log \lambda + \lambda' \log \lambda')mn- 
\mathcal{O}(mn^{2\eps})\right).
\end{align*} 
A similar argument yields
\begin{align*}
  \prod_{j=1}^n {m \choose t_j}  
& = \exp \left(-(\lambda \log \lambda + \lambda' \log \lambda')mn- 
\mathcal{O}(nm^{2\eps}) \right).
\end{align*}
Combining all of the above approximations with Theorem \ref{approx}, we find that
$$\mathcal{N}({\bf s},{\bf t}) = \exp \left( 
-(\lambda \log \lambda + \lambda' \log \lambda')mn
-\mathcal{O}(mn^{2\eps}+nm^{2\eps}) \right). $$
There are $\binom{mn}{\lambda mn}$ labelled bipartite graphs with sides of size $m$ and $n$ and density $\lambda$, so
\begin{align} \label{lb}
\mathbb{P}_{\lambda}(E_{{\bf s},{\bf t}}) 
=\dfrac{\mathcal{N}({\bf s},{\bf t})}{\binom{mn}{\lambda mn}}
=\exp \left( 
-\mathcal{O}(mn^{2\eps}+nm^{2\eps}) \right).
\end{align}

We now need to find an upper bound on  $\mathbb{P}_{\lambda}(\mathcal{P})$. Let $N=\floor[\big]{(c/\lambda)(mn^{1/2+\eps}+nm^{1/2+\eps})\,}$. Then we have
\begin{align}
\mathbb{P}_{\lambda}(\mathcal{P})
&\leq\mathbb{P}\left(\exists \,A,B\text{ such that }\big | e(A,B) - \lambda|A||B|\big |  > \lambda N\right) \nonumber\\ 
&\leq \sum\limits_{A,B}\mathbb{P}\left(\big| e(A,B) - \lambda |A||B|\big |  > \lambda N\right)  \nonumber\\ 
& = \sum\limits_{|A||B| >  N}\!\!\!\mathbb{P}\left( \big | e(A,B) - \lambda |A||B|\big |  > \lambda N \right) 
+\sum\limits_{|A||B| \le N}\!\!\!\mathbb{P}\left(\big | e(A,B) - \lambda |A||B| \big |  > \lambda N \right)\!,
\label{e:f1}
\end{align}
where the first inequality follows from the union bound and this sum is over all pairs of subsets $A$ and $B$ of the vertices of $G(n,m,\lambda)$, each from a different side. 
By Theorem \ref{chernoff} {\it (ii)},
\begin{align}
\sum\limits_{|A||B| > N}\mathbb{P}\left(\big | e(A,B) - \lambda|A||B|\big |  > \lambda N\right)
& = \sum_{k=N+1}^{mn} \,\, \sum_{|A||B|=k} \mathbb{P}\left(\big | e(A,B) - \lambda |A||B| \big |  > \lambda N \right)\nonumber\\ 
& \le \sum_{k=N+1}^{mn} \,\, \sum_{|A||B|=k} 2 \exp \left( -\lambda N^2/(3k) \right)\nonumber\\ 
& \le \exp \left( -\lambda N^2/(3mn) \right) \sum_{k=N+1}^{mn} \sum_{|A||B|=k} 2 \nonumber\\ 
& \le \exp \left( -\lambda N^2/(3mn) \right) 2^{m+n+1},\label{e:f2}
\end{align}
where the last inequality is due to the fact that the number of pairs $A,B$ is bounded above by $2^{m+n}$. We now move on to those subsets satisfying $|A||B| \le N$.  Note that in this case, if $e(A,B) - \lambda|A||B|\leq0$ then $\big| e(A,B) - \lambda|A||B|\big|\leq\lambda|A||B| \leq \lambda N$. Thus, by Theorem \ref{chernoff}{\it (i)}, we have
\begin{align}
\sum\limits_{|A||B| \le N}\mathbb{P}\left(\big | e(A,B) - \lambda|A||B|\big |  > \lambda N\right)
& = \sum_{k=1}^N \,\, \sum_{|A||B|=k} \mathbb{P}\left(\big | e(A,B) - \lambda |A||B| \big |  > \lambda N \right)
\nonumber\\ 
& \le \sum_{k=1}^N \,\, \sum_{|A||B|=k} \exp \left( -\frac{\lambda N^2}{2k+N}\right)
\nonumber\\ 
& \le \exp \left(-\lambda N^2/(3N)\right) \sum_{k=1}^N \,\, \sum_{|A||B|=k} 1
\nonumber\\ 
& \le \exp \left( -\lambda N/3 \right)2^{m+n}.\label{e:f3}
\end{align}
Combining (\ref{e:f1}), (\ref{e:f2}) and (\ref{e:f3}), we have
\begin{align*}
\mathbb{P}_{\lambda}(\mathcal{P})
\leq \exp \left(-\frac{\lambda N^2}{3mn}(1+o(1))\right).
\end{align*}
We can choose $c$ large enough that
$\lambda N^2/(3mn)$ exceeds any fixed multiple of $(mn^{2\eps} +nm^{2\eps})$.
In comparison, (\ref{lb}) is independent of $c$. So for an appropriately large $c$,
\begin{align*}
\frac{\mathbb{P}_{\lambda}(\mathcal{P})}{\mathbb{P}_{\lambda}(E_{{\bf s},{\bf t}})} & \leq \exp\left(\mathcal{O}(mn^{2\eps} +nm^{2\eps})- \lambda N^2/(3mn)\right)=o(1). 
\end{align*}
Hence $\mathbb{P}_{{\bf s},{\bf t}}(\mathcal{P})$ tends to zero as $m,n \rightarrow \infty$ and we are done.
\end{proof}

A topic for future research might be to try to identify which matrices
have the largest sds and what structure those defining sets have. Our
proofs do not give much insight into these questions. However, we do
at least know that $\lambda$ must be very close to $1/2$ in order to
achieve $\maxsds(m,n)$.

  \let\oldthebibliography=\thebibliography
  \let\endoldthebibliography=\endthebibliography
  \renewenvironment{thebibliography}[1]{
    \begin{oldthebibliography}{#1}
      \setlength{\parskip}{0.4ex plus 0.1ex minus 0.1ex}
      \setlength{\itemsep}{0.4ex plus 0.1ex minus 0.1ex}
  }
  {
    \end{oldthebibliography}
  }

\end{document}